\theoremstyle{plain}
\newtheorem{thm}{Theorem}
\newtheorem{lemma}[thm]{Lemma}
\newtheorem*{claim*}{Claim}
\newtheorem{prop}[thm]{Proposition}
\newtheorem{cor}[thm]{Corollary}
\theoremstyle{definition}
\theoremstyle{remark}
\newcommand*{\rom}[1]{\expandafter\@slowromancap\romannumeral #1@}
\newcommand{\bbn}{\mathbb{N}}
\newcommand{\bbr}{\mathbb{R}}
\newcommand{\cals}{\mathcal{S}}
\newcommand{\calt}{\mathcal{T}}
\newcommand{\fraks}{\mathfrak{s}}
\newcommand{\frakt}{{\mathfrak{t}}}
\newcommand\abs[1]{\left|{#1}\right|}
\begin{document}

\begin{center}

{\bf\large{A geometric iterated function system on triangles}}

\vskip 0.5cm 

{Jiajun Wang}
\vskip 0.1in 
{\small{ \textit{LMAM, School of Mathematics Sciences, Peking University\\Beijing, 100871, P. R. China\\ wjiajun@pku.edu.cn}}}

\vskip 0.2in 

{Ying Zhang}
\vskip 0.1in 
{\small{ \textit{School of Mathematics Sciences, Soochow University\\Suzhou, 215006, P. R. China\\yzhang@suda.edu.cn}}}

\end{center}

\vskip 0.2in
\baselineskip 16pt
\abstract{We construct a converging geometric iterated function system on the moduli space of ordered triangles, for which the involved functions have geometric meanings and contain a non-contraction map under the natural metric.}
\date{September, 2011}

\section{Introduction}\label{sec:intro}

Iterated function systems are often used to construct fractals. Given a metric space, a classical \emph{iterated function system} is a finite set of functions
$$\left\{f_i:X\rightarrow X\ \big|\ i=1,2,\dots,N\right\}\quad (N\in\bbn)$$
such that each $f_i$ is a contraction. In \cite{Barnsley:2011dy}, Barnsley and Vince showed that the IFSs of noncontractive type (i.e. composed of maps that are not contractions with respect to any topologically equivalent metric in X) can yield attractor. These arise naturally in projective spaces, though classical irrational rotation on the circle can be adapted too.

In the present paper, we will construct a converging geometric IFS for which the functions are not contraction maps under the natural metric.

We study the limit behavior of iteratedly dividing a triangle in a natural way. Let $\calt$ be the moduli space of hyperbolic/Euclidean triangles modulo (ordered) similarities. A hyperbolic triangle is given in Figure \ref{fig:hyperbolic_triangle}.
\begin{figure}[htbp]
\begin{center}
\includegraphics{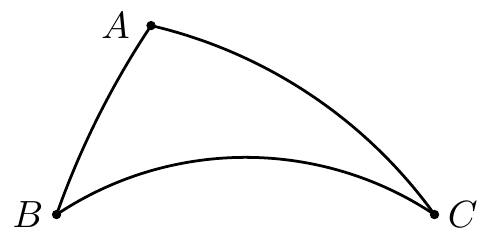}
\end{center}
\begin{minipage}{0.9\textwidth}
\caption{\label{fig:hyperbolic_triangle}
{\bf A hyperbolic triangle in the upper plane model.}}
\end{minipage}
\end{figure}
An ordered triangle is parametrized by the (ordered) triple of its three angles. $\calt$ is naturally identified with the tetrahedron $\left\{(x,y,z)\in\bbr^3\ \big|\ x,y,z>0, x+y+z\leqslant\pi\right\}$ in $\bbr^3$. Given a hyperbolic triangle $\triangle_{ABC}$,  the three \emph{mid-lines} joining the midpoints of its edges divide it into four smaller triangles.
\begin{figure}[htp]
\center{\includegraphics[width=400pt]{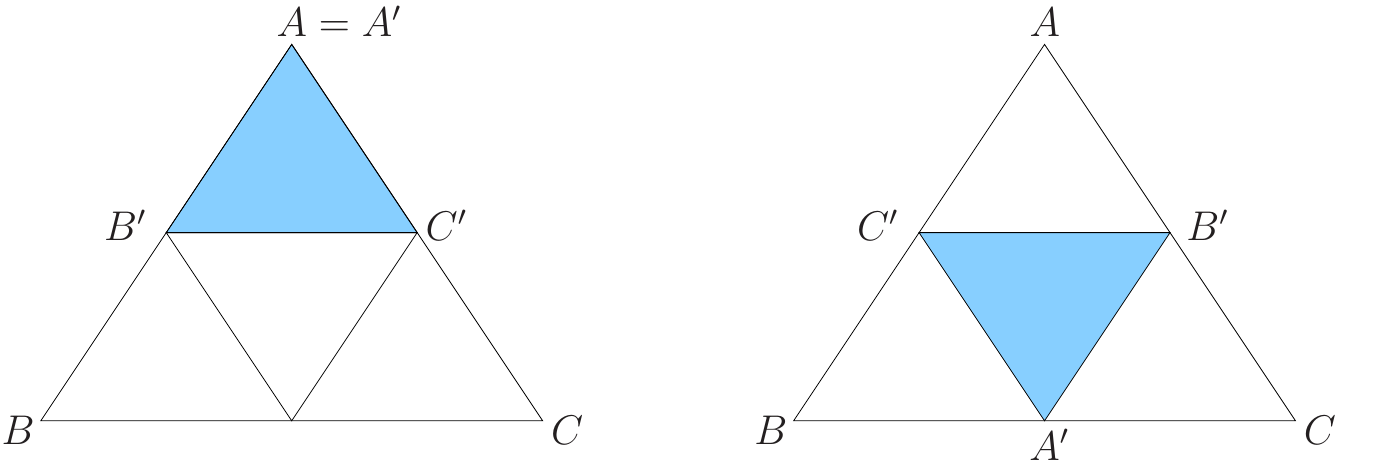}}\\
\begin{minipage}{0.9\textwidth}
\caption{\label{fig:defining_triangle}
{\bf The defining functions} The shadowed triangles are $f_A(\triangle_{ABC})$ and $f_M(\triangle_{ABC})$ respectively. }
\end{minipage}
\end{figure}
Let $f_A, f_B, f_C$ and $f_M$ be functions on $\calt$ that maps $\triangle_{ABC}$ to one of the smaller triangles, with vertices ordered in the natural way so that they are identical for Euclidean ones, as indicated in Figure \ref{fig:defining_triangle}. Our main result is the following

\begin{thm}\label{thm:main}
The iterated function system $\{f_A,f_B,f_C,f_M\}$ on $\calt$ is converging and the limit functions are continuous.
\end{thm}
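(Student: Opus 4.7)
The strategy is to reduce everything to the scalar angle defect $\delta(T) = \pi - x - y - z$ on $\calt$, which behaves cleanly under all four maps and quantifies how far the IFS is from being trivial. First I would use the hyperbolic laws of cosines and the formula for the hyperbolic midpoint of a geodesic to express each $f_i$ as a smooth self-map of the open tetrahedron with a continuous extension to $\overline{\calt}=\{x,y,z\geq 0,\ x+y+z\leq\pi\}$. On the Euclidean face $\{x+y+z=\pi\}$ each $f_i$ reduces to the identity---this is the midpoint theorem, and it is precisely why the vertex ordering in Figure~\ref{fig:defining_triangle} is chosen as it is. I would also record a uniform Lipschitz bound for the $f_i$ on $\overline{\calt}$ in the Euclidean norm of $\bbr^3$.

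Since hyperbolic area equals angle defect and the four sub-triangles tile $\triangle_{ABC}$, one has the additivity identity
\[
\delta(f_A(T)) + \delta(f_B(T)) + \delta(f_C(T)) + \delta(f_M(T)) = \delta(T),
\]
so $\delta\circ f_i \leq \delta$, with strict inequality whenever $T$ is genuinely hyperbolic (since then all four sub-triangles are hyperbolic and have positive defect). Combining this with a first-order expansion of the trigonometric formulas in $\delta$---valid because each $f_i$ restricts to the identity on $\{\delta=0\}$---I would obtain the quantitative bound
\[
\|f_i(T) - T\| \leq C\,\delta(T) \qquad (T\in\overline{\calt}),
\]
which is the linear version of the identity-on-face property.

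For a code sequence $\omega\in\{A,B,C,M\}^{\bbn}$ and $F_n = f_{\omega_1}\circ\cdots\circ f_{\omega_n}$, the telescoping identity
\[
F_{n+1}(T_0) - F_n(T_0) \;=\; F_n\bigl(f_{\omega_{n+1}}(T_0)\bigr) - F_n(T_0)
\]
lets me peel off the composition layer by layer: each layer contributes via the Lipschitz bound above, while the defects of the inner iterates $f_{\omega_k}\circ\cdots\circ f_{\omega_{n+1}}(T_0)$ decay summably thanks to the additivity identity. This yields a Cauchy sequence with limit $F_\omega(T_0)$; the estimates are uniform in $T_0$ on compacta of $\overline{\calt}$, so by Arzel\`a--Ascoli the limit function $F_\omega$ is continuous.

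The heart of the matter is the last Cauchy estimate. Because $f_M$ is not a contraction in the natural metric, one cannot obtain equicontinuity of $\{F_n\}$ from a straightforward product of Lipschitz constants; the argument must instead extract summability of displacements from the strict-but-slow contraction of $\delta$ furnished by the additivity identity, via the first-order vanishing $f_i-\mathrm{id}=O(\delta)$. I expect this is where the real work will lie.
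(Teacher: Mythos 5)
There is a genuine gap, and it sits exactly where you predicted the ``real work'' would lie: the summability of the displacements. The additivity identity $\sum_{i}\delta(f_i(T))=\delta(T)$ gives only monotonicity, $\delta(T_{n+1})\leq\delta(T_n)$ along an orbit, and a monotone decreasing sequence need not be summable; nothing in your plan rules out the defect decaying like $1/n$ or slower, in which case the telescoping series $\sum\|T_{n+1}-T_n\|\leq C\sum\delta(T_n)$ diverges. Pointwise strict inequality $\delta(f_i(T))<\delta(T)$ does not upgrade to a uniform ratio because $\calt_H$ is not compact (the defect distributes very unevenly among the four sub-triangles for large, near-ideal triangles). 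The paper's essential quantitative input, which your plan has no substitute for, is Lemma \ref{lemma:edge_estimate}: via a Lambert quadrilateral one shows $\sinh(a_{n+1}/2)<\tfrac12\sinh(a_n/2)$ for \emph{every} one of the four maps, whence edge lengths decay geometrically and (by Cagnoli's formula $\sin(S_n/2)=\sinh(b_n/2)\sinh(c_n/2)\sin A_n/\cosh(a_n/2)$) so does the area/defect, like $4^{-n}$. Only with that in hand does a telescoping argument close.

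Two secondary issues. First, the uniform bound $\|f_i(T)-T\|\leq C\delta(T)$ on all of $\overline{\calt}$ requires the $f_i$ to extend $C^1$ up to the degenerate faces of the tetrahedron (angles near $0$ or $\pi$), which is not automatic and is not addressed; the paper sidesteps this by estimating the multiplicative change of the angles directly, proving $1/\cosh(a_n/2)<\sin A_{n+1}/\sin A_n<\cosh(b_n/2)\cosh(c_n/2)$ from the law of sines and the Cagnoli--Keogh area formulas, so that $|\ln\sin A_{n+k}-\ln\sin A_n|\lesssim\sum\sinh^2(a_i/2)$ is summable and bounded away from $-\infty$ (giving nondegeneracy of the limit for free, which your $O(\delta)$ bound alone does not). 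Second, your telescoping $F_{n+1}(T_0)-F_n(T_0)=F_n(f_{\omega_{n+1}}(T_0))-F_n(T_0)$ is bounded by $\mathrm{Lip}(F_n)\cdot C\,\delta(T_0)$, where $\mathrm{Lip}(F_n)$ may grow like $L^n$ with $L>1$ (indeed $f_M$ is not a contraction) and $\delta(T_0)$ does not decay; you need the orbit ordering $T_{n+1}=f_{\omega_{n+1}}(T_n)$ (as in the paper) so that the displacement at stage $n$ is controlled by the defect of the $n$-th iterate rather than by $\delta(T_0)$ times an exponentially growing constant.
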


The functions can be determined as follows. Let $a, b, c$ be the length of edges $BC$, $CA$ and $AB$ respectively and define $a^\prime$, $b^\prime$ and $c^\prime$ to be the length of $f_M(\triangle_{ABC})$ in the similar way. Then $a^\prime, b^\prime$ and $c^\prime$ are related to $A$, $B$ and $C$ by
\begin{align}\label{eqn:edge_angle_relation}
\begin{split}
\cos(A)=\frac{\cosh(b^\prime)\cosh(c^\prime)-\cosh(a^\prime)}{\sinh(b^\prime)\sinh(c^\prime)},\\
\cos(B)=\frac{\cosh(c^\prime)\cosh(a^\prime)-\cosh(b^\prime)}{\sinh(c^\prime)\sinh(a^\prime)},\\
\cos(C)=\frac{\cosh(a^\prime)\cosh(b^\prime)-\cosh(c^\prime)}{\sinh(a^\prime)\sinh(b^\prime)}.
\end{split}
\end{align}
And the edge lengths $a$, $b$ and $c$ are related to $a^\prime$, $b^\prime$ and $c^\prime$ by
\begin{equation}\label{eqn:edge_length_iteration}
\cosh(a^\prime) =\cosh\left(\frac{a}{2}\right)\cdot\mu,\quad \cosh(b^\prime) =\cosh\left(\frac{b}{2}\right)\cdot\mu,\quad \cosh(c^\prime) =\cosh\left(\frac{c}{2}\right)\cdot\mu,
\end{equation}
where
$$\mu:=\frac{1-\displaystyle{\tanh\frac{a+b+c}{4} \tanh\frac{a+b-c}{4} \tanh\frac{c+a-b}{4} \tanh\frac{b+c-a}{4}}}
{1+\displaystyle{\tanh\frac{a+b+c}{4} \tanh\frac{a+b-c}{4} \tanh\frac{c+a-b}{4} \tanh\frac{b+c-a}{4}}}.$$
We use Equation \ref{eqn:edge_angle_relation} to compute the edge lengths of $f_M(\triangle_{ABC})$. Then the edge lengths of the image triangle $\triangle_*$ of $\triangle_{ABC}$ under $f_A$, $f_B$, $f_C$ or $f_M$ is known.  Now Equation \ref{eqn:edge_length_iteration} determines the edge lengths of the triangle $f_M(\triangle_*)$ and Equation \ref{eqn:edge_angle_relation} then compute the angles of $\triangle_*$ from the edge lengths of $f_M(\triangle_*)$.

We remark that the corresponding result holds equally well for spherical triangles. We may also ask the following questions. Are the limit functions smooth? Does the limit function give a half-line bundle structure over $\calt$?

The papers is organized as follows. In Section 2, we study the iterated behaviours of the edge lengths, areas and angles. In Section 3, we show the continuity of the limit functions.

\section{Iterated division}

We refer to \cite{Casey:1889} for formulas in hyperbolic geometry.

Let $\{\triangle_n\}_{n=0}^\infty$ be a sequence of iterated action of the functions $f_A,f_B,f_C$ and $f_M$ on the triangle $\triangle_0=\triangle_{A_0B_0C_0}$, as illustrated in Figure \ref{fig:iteration_triangle}. 
\begin{figure}[htbp]
\center{\includegraphics[width=400pt]{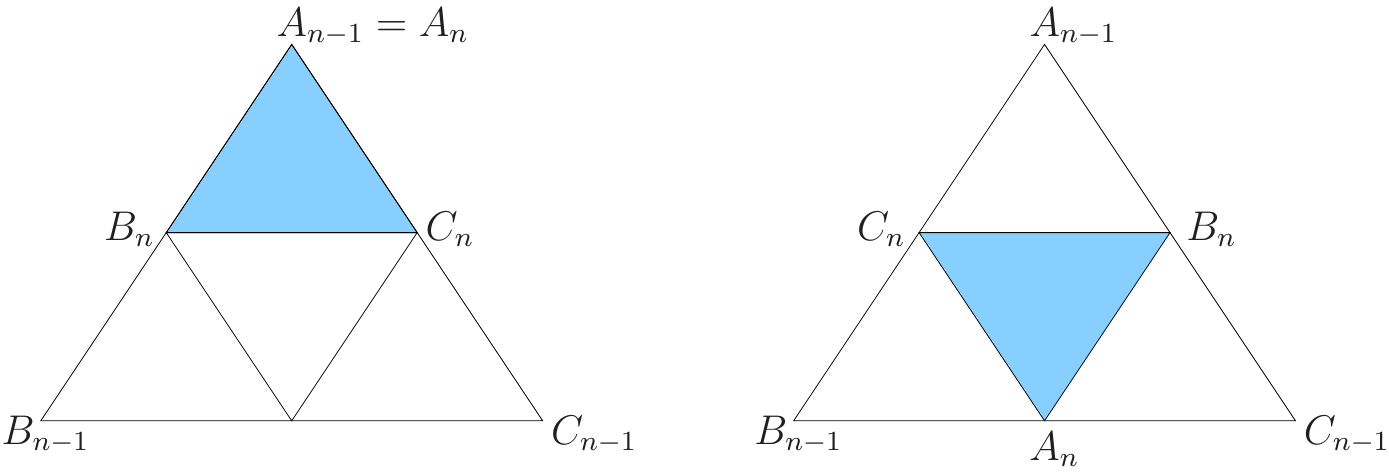}}\\
\begin{minipage}{0.9\textwidth}
\caption{\label{fig:iteration_triangle}
{\bf The inductive definition.} The shadowed triangle is $\triangle_n$. }
\end{minipage}
\end{figure}
We use $A_n$ to denote the angle $\angle B_nA_nC_n$, and $a_n$ the edge $B_nC_n$ (or its length). We define $B_n$, $C_n$, $b_n$ and $c_n$ similarly. Let $S_n$ be the (hyperbolic) area of $\triangle_n$. We have $A_n+B_n+C_n=1-S_n$.

The total angle will converge to $\pi$ since the area will converge to zero. It is then prone to think that each angle will increase and share the angle defect. However, this is not the case. For example, for an isosceles triangle $\triangle_*$ with edges lengths $4$, $4$ and $7$, the apex angle will increase while the bottom angles will decrease for $f_M$. So it is not trivial that the sequence will automatically converge to a nondegenerate Euclidean triangle. The distance of $\triangle_*$ from the fixed point $\left(\displaystyle{\frac{\pi}3,\frac\pi3,\frac\pi3}\right)$ will increase. Hence $f_M$ is NOT a contraction map.

The following lemma estimate the edge length changing under the IFS.
\begin{lemma}\label{lemma:edge_estimate}
For the sequence $\left\{\triangle_n\right\}_{n=0}^\infty$, we have
$$\sinh\frac{a_{n+1}}2<\frac12\sinh\frac{a_{n}}2.$$
Under the assumptions $\max\left\{\displaystyle{\sinh\frac{a_0}2}, \displaystyle{\sinh\frac{b_0}2}, \displaystyle{\sinh\frac{c_0}2}\right\}<\sigma$, we have
$$\left(\frac1{e\sqrt{e}}\right)^\sigma\cdot\frac{1}{2^n}\sinh\frac{a_0}2<\sinh\frac{a_{n}}2<\frac1{2^n}\sinh\frac{a_{0}}2.$$
\end{lemma}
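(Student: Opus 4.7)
First I identify the new edge $a_{n+1}$ by reading off the natural labeling from Figure~\ref{fig:defining_triangle} (which is pinned down by the requirement that $f_X(\triangle)$ equals $\triangle$ as an ordered element of $\calt$ in the Euclidean case). This gives two possibilities: for $f_B(\triangle_n)$ and $f_C(\triangle_n)$, the new $a_{n+1}$ equals $a_n/2$ (a half-edge at the midpoint of $B_nC_n$); for $f_A(\triangle_n)$ and $f_M(\triangle_n)$, the new $a_{n+1}$ equals $a'_n$, the mid-line joining the midpoints of $A_nB_n$ and $A_nC_n$. The half-edge case is immediate: $\sinh(a_n/4)=\sinh(a_n/2)/(2\cosh(a_n/4))<\tfrac12\sinh(a_n/2)$ because $\cosh(a_n/4)>1$.

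For the mid-line case, I apply the hyperbolic law of cosines both to the auxiliary triangle at $A_n$ with sides $c_n/2,b_n/2$ and included angle $A_n$ and to $\triangle_n$ itself, and use the product-to-sum identities $\cosh(b\pm c)=\cosh b\cosh c\pm\sinh b\sinh c$ to obtain
\begin{align*}
\sinh^2(a'_n/2) &= \sinh^2\tfrac{b_n+c_n}{4}\sin^2\tfrac{A_n}{2}+\sinh^2\tfrac{b_n-c_n}{4}\cos^2\tfrac{A_n}{2},\\
\sinh^2(a_n/2) &= \sinh^2\tfrac{b_n+c_n}{2}\sin^2\tfrac{A_n}{2}+\sinh^2\tfrac{b_n-c_n}{2}\cos^2\tfrac{A_n}{2}.
\end{align*}
The duplication identity $\tfrac14\sinh^2(2x)=\sinh^2(x)\cosh^2(x)$ then yields the clean subtraction
\begin{equation*}
\tfrac14\sinh^2(a_n/2)-\sinh^2(a'_n/2)=\sinh^4\tfrac{b_n+c_n}{4}\sin^2\tfrac{A_n}{2}+\sinh^4\tfrac{b_n-c_n}{4}\cos^2\tfrac{A_n}{2}>0,
\end{equation*}
which is the strict inequality in the mid-line case.

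By the same argument applied to $b$ and $c$, the quantity $M_k:=\max\{\sinh(a_k/2),\sinh(b_k/2),\sinh(c_k/2)\}$ is strictly halved at each step, giving $M_k<\sigma/2^k$ by induction. Setting $\epsilon_k:=\log\bigl(\sinh(a_k/2)/(2\sinh(a_{k+1}/2))\bigr)\geq0$, we have $\sinh(a_n/2)=2^{-n}\exp(-\sum_{k=0}^{n-1}\epsilon_k)\sinh(a_0/2)$, so the lower bound reduces to $\sum_{k\geq0}\epsilon_k<\tfrac32\sigma$. In the half-edge case, $\epsilon_k=\log\cosh(a_k/4)\leq a_k/4\leq\tfrac12\sinh(a_k/2)\leq\sigma/2^{k+1}$, contributing at most $\sigma$ in total. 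In the mid-line case, the identity above gives $\sinh^2(a_k/2)-4\sinh^2(a'_k/2)\leq4M_k^4$; writing $\epsilon_k=-\tfrac12\log(1-x_k)$ with $x_k$ the normalized deficit and using $-\tfrac12\log(1-x)\leq x/(2(1-x))$ should then furnish the remaining $\tfrac\sigma2$.

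\textbf{Main obstacle.} The delicate point is the mid-line bound: the crude estimate $x_k\leq4M_k^4/\sinh^2(a_k/2)$ becomes useless whenever $\sinh(a_k/2)$ is much smaller than $M_k$. To bypass this I plan to use the identity $1-\mu_k=2\sin^2(S_k/4)$, where $S_k$ is the hyperbolic area of $\triangle_k$ and $\mu_k$ is the factor in \eqref{eqn:edge_length_iteration}; this recasts the mid-line deficit directly in terms of $S_k$, and since the four sub-triangles partition $\triangle_k$ the area $S_k$ itself decays geometrically, giving an area-based estimate on the extra loss that is summable within the target $\tfrac\sigma2$.
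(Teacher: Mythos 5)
Your proof of the first inequality (and hence of the upper bound $\sinh\frac{a_n}2<2^{-n}\sinh\frac{a_0}2$) is correct and takes a genuinely different route from the paper. You split into the same two cases the paper uses (half-edge for $f_B,f_C$, mid-line for $f_A,f_M$), but where the paper drops perpendiculars to obtain the Lambert-quadrilateral relation $\sinh\frac{a_n}2=\sinh(a_{n+1})\cosh(l_n)$, you derive from the law of cosines the identity
$$\tfrac14\sinh^2\tfrac{a_n}2-\sinh^2\tfrac{a_n'}2=\sinh^4\tfrac{b_n+c_n}4\,\sin^2\tfrac{A_n}2+\sinh^4\tfrac{b_n-c_n}4\,\cos^2\tfrac{A_n}2>0,$$
which I have checked: both of your displayed formulas follow from $\cosh a'=\cosh\frac{b-c}2+2\sinh\frac b2\sinh\frac c2\sin^2\frac A2$ and its analogue for $a$, together with $\sinh^2x-\sinh^2y=\sinh(x+y)\sinh(x-y)$. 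This is a clean algebraic substitute for the paper's synthetic argument, and it buys you an exact formula for the deficit rather than just its sign.

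The lower bound, however, has a genuine gap, which you flag yourself. Reducing the claim to $\sum_k\epsilon_k\leq\frac32\sigma$ is fine and the half-edge steps are handled, but for the mid-line steps you offer only the crude bound $x_k\leq 4M_k^4/\sinh^2(a_k/2)$ — useless when $a_k\ll\max(b_k,c_k)$, as you admit — plus an unexecuted plan to reroute through the area via $1-\mu_k=2\sin^2(S_k/4)$. Neither is a proof, so the displayed lower bound is not established. The fix is already contained in your own two identities: comparing them term by term and using $\cosh\frac{b_n-c_n}4\leq\cosh\frac{b_n+c_n}4$ gives
$$\frac{4\sinh^2(a_n'/2)}{\sinh^2(a_n/2)}\;\geq\;\frac{1}{\cosh^2\frac{b_n+c_n}4},\qquad\text{hence}\qquad \epsilon_k\leq\ln\cosh\tfrac{b_k+c_k}4\leq\tfrac12\ln\bigl(1+M_k^2\bigr)\leq\tfrac12 M_k^2,$$
a per-step loss controlled by $M_k$ alone, independent of how small $a_k$ is, and summable. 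This is exactly the paper's mechanism in disguise: there the loss factor is $\cosh(a_{n+1}/2)\cosh(l_n)$ with $l_n<b_n/2$, so the telescoped product is bounded by $\prod_k\bigl(1+\sinh^2(a_k/2)\bigr)\bigl(1+\sinh^2(b_{k-1}/2)\bigr)<e^3$ using the already-proved upper bound. (Note that both your budget and the paper's deliver the stated constant cleanly only for $\sigma\leq1$; the paper silently normalizes to $\sinh\frac{a_0}2,\sinh\frac{b_0}2,\sinh\frac{c_0}2<1$, so the general-$\sigma$ exponent is equally underjustified there.)
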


\begin{proof} In the triangle $\triangle_n$, let $\ell$ be the line passing through the midpoints $C_{n+1}$ on $A_nB_n$ and $B_{n+1}$ on $A_nC_n$. Draw lines through $B_n$ and the midpoint $A_{n+1}$ of $B_nC_n$ to $\ell$. Let $E$ and $G$ be the intersection points respectively. Then we get the \emph{Lambert quadrilateral} as in Figure \ref{fig:lambert_quadrilateral}.
\begin{figure}[htbp]
\begin{center}
\includegraphics{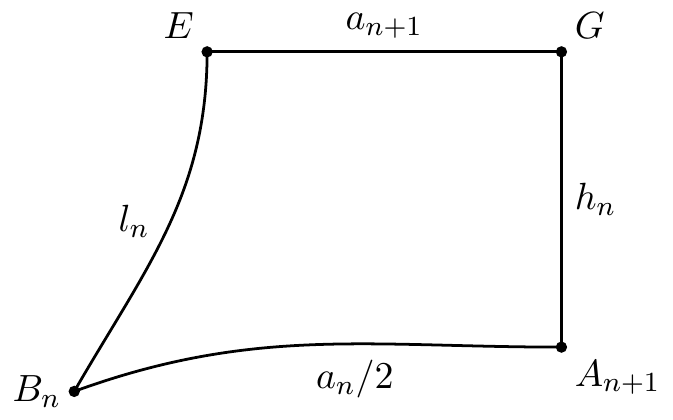}
\end{center}
\begin{minipage}{0.9\textwidth}
\caption{\label{fig:lambert_quadrilateral}
{\bf  The Lambert quadrilateral.}}
\end{minipage}
\end{figure}
The segment $A_{n+1}G$ is also perpendicular to $B_nA_{n+1}$. By congruence of triangles, the length of $EG$ is the same as the length of the mid-line $B_{n+1}C_{n+1}$. We have
 $$\sinh\left(\frac{a_n}2\right)=\sinh(a_{n+1})\cosh(l_n)=2\sinh\frac{a_{n+1}}2\cosh\frac{a_{n+1}}2\cosh{l_n}$$
Therefore
$$\sinh\left(\frac{a_{n+1}}2\right)\left/\sinh\left(\frac{a_n}2\right)\right.=\frac{1}{2\cosh(a_{n+1}/2)\cosh(l_n)}<\frac12$$
When $\displaystyle{a_{n+1}=\frac12a_n}$, the inequality comes from the convexity of the function $\sinh(x)$.

Therefore, we may suppose $\displaystyle{\sinh\frac{a_0}2<1}$, $\displaystyle{\sinh\frac{b_0}2<1}$ and $\displaystyle{\sinh\frac{c_0}2<1}$. Then we have
$$\sinh\frac{a_n}2<\frac{1}{2^n},\quad\sinh\frac{b_n}2<\frac1{2^n},\quad\sinh\frac{c_n}2<\frac1{2^n}$$
When $a_n$ joins the midpoint, the Lambert quadrilateral tells that $l_n<\displaystyle{\frac{b_n}2}$, and hence we have
 we have
$$\sinh^2\frac{a_n}2=\frac{1}{4\cosh^2(a_n/2)\cosh^2(l_{n-1})}\cdot\sinh^2\frac{a_{n-1}}2>\frac{1}{4\cosh^2(a_n/2)\cosh^2(b_{n-1}/2)}\cdot\sinh^2\frac{a_{n-1}}2$$
When $a_n=\displaystyle{\frac12a_{n-1}}$, we have
$$\sinh^2\frac{a_n}2=\frac{1}{4\cosh^2(a_n/2)}\cdot\sinh^2\frac{a_{n-1}}2>\frac{1}{4\cosh^2(a_n/2)\cosh^2(b_{n-1}/2)}\cdot\sinh^2\frac{a_{n-1}}2$$
In all cases, we have
{\allowdisplaybreaks\begin{align*}
\sinh^2\frac{a_n}2>&\frac{1}{4\cosh^2(a_n/2)\cosh^2(b_{n-1}/2)}\cdot\sinh^2\frac{a_{n-1}}2\\
>&\cdots>\frac{1}{4^n}\sinh^2\frac{a_0}2\left/\left(\prod_{k=1}^n\big(1+\sinh^2(a_k/2)\big)\big(1+\sinh^2(b_{k-1}/2)\big)\right)\right.\\
>&\frac{1}{4^n}\sinh^2\frac{a_0}2\left/\exp\left(\sum_{k=1}^\infty\ln\big(1+\sinh^2(a_k/2)\big)+\sum_{k=0}^\infty\ln\big(1+\sinh^2(b_{k-1}/2)\big)\right)\right.\\
>&\frac{1}{4^n}\sinh^2\frac{a_0}2\left/\exp\left(\sum_{k=1}^\infty\ln\big(1+\frac1{2^k}\big)+\sum_{k=0}^\infty\ln\big(1+\frac{1}{2^k}\big)\right)\right.\\
>&\frac{1}{4^n}\sinh^2\frac{a_0}2\left/\exp\left(\sum_{k=1}^\infty\frac1{2^k}+\sum_{k=0}^\infty\frac{1}{2^k}\right)\right.=\frac1{4^ne^3}\sinh^2\frac{a_0}2\\
\end{align*}}
So
$$\sinh\frac{a_n}2>\frac{1}{e\sqrt{e}}\cdot\frac{1}{2^n}\sinh\frac{a_0}2.$$
\end{proof}

\begin{prop}\label{prop:converging_nondegenerate}
For any hyperbolic triangle $\triangle_0$, the sequence $\left\{\triangle_n\right\}_{n=0}^\infty$ converges to a nondegenerate Euclidean triangle.
\end{prop}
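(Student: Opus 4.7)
My plan is to combine the edge-length decay from Lemma~\ref{lemma:edge_estimate} with a ``Euclideanization'' argument. The guiding principle is that in the flat limit all four sub-triangles are similar to the original (this is what the ``natural ordering'' in the definition of $f_A, f_B, f_C, f_M$ arranges), so the hyperbolic iteration on a tiny $\triangle_n$ is a small perturbation of a shape-preserving map. First, applying Lemma~\ref{lemma:edge_estimate} symmetrically to each edge gives two-sided bounds
$$c_1 \cdot 2^{-n} \leqslant \sinh(a_n/2),\ \sinh(b_n/2),\ \sinh(c_n/2) \leqslant c_2 \cdot 2^{-n}$$
with positive $c_1,c_2$ depending only on $\triangle_0$. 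Consequently $a_n,b_n,c_n\to 0$; the ratios $a_n:b_n:c_n$ lie in a fixed compact region of the projective simplex; and an elementary estimate yields $S_n=O(a_nb_n)=O(4^{-n})$, so $A_n+B_n+C_n=\pi-S_n\to\pi$.

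Next I would show that the normalized edge triple $(2^na_n, 2^nb_n, 2^nc_n)$ is Cauchy. Under any of $f_A, f_B, f_C$, two of the three new edges are exactly half of the corresponding old ones (vertex-to-midpoint segments), while the third --- and for $f_M$ all three --- is a mid-line governed by (\ref{eqn:edge_length_iteration}). By L'Huilier's theorem, the product of four $\tanh$ factors in $\mu$ is exactly $\tan^2(S/4)$, so $\mu=\cos(S/2)$ and $\cosh(a^\prime)=\cosh(a/2)\cos(S/2)$. Combining this with $S_n=O(a_n^2)$ and Taylor expanding at the origin gives, for each mid-line edge, $a_{n+1}=a_n/2+O(a_n^3)$; the non-mid-line edges satisfy $a_{n+1}=a_n/2$ exactly. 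In either case,
$$\bigl|2^{n+1}a_{n+1}-2^na_n\bigr|=O(2^na_n^3)=O(4^{-n}),$$
and similarly for $b,c$. Summing the resulting geometric tail, $(2^na_n,2^nb_n,2^nc_n)$ converges to some $(\alpha,\beta,\gamma)$ with all entries strictly positive (by the lower bound). Finally, expanding the hyperbolic law of cosines gives $A_n=\widetilde A_n+O(a_n^2+b_n^2+c_n^2)$, where $\widetilde A_n$ is the Euclidean angle determined by the sides $(a_n,b_n,c_n)$; since $\widetilde A_n$ depends only on the ratios, which converge, $A_n$ converges, and likewise $B_n,C_n$. The limit angles sum to $\pi$ (by the first step) and are strictly positive, so $\triangle_n$ converges to a nondegenerate Euclidean triangle.

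The main obstacle is extracting a summable, $O(r^n)$, correction (not merely $o(1)$) to the Euclidean iteration on the normalized edges. A naive bound on $\mu$ would give only $|\mu-1|=O((a+b+c)^2)$, producing an $a_{n+1}-a_n/2$ correction too coarse to close the Cauchy argument after the $2^n$ rescaling. It is precisely the identity $\mu=\cos(S/2)$ that supplies the extra factor of area, pushing the correction down to $O(a_n^3)$; verifying this identity from the explicit formula for $\mu$ via L'Huilier's theorem is therefore the heart of the calculation.
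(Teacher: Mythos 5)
Your route is genuinely different from the paper's: you control the normalized side lengths $2^n(a_n,b_n,c_n)$ and recover the angles at the end, whereas the paper controls the angles directly, deriving the two-sided bound $1/\cosh(a_n/2)<\sin A_{n+1}/\sin A_n<\cosh(b_n/2)\cosh(c_n/2)$ from the Lambert quadrilateral relation together with the law of sines and the Cagnoli and Keogh area formulas, and then showing $\ln\sin A_n$ is Cauchy. The convergence half of your argument is essentially sound: the identity $\mu=\cos(S/2)$ via L'Huilier is correct, although it is not really the crux you make it out to be --- the product of the \emph{four} $\tanh$ factors is already $O(\varepsilon^4)$ term by term (each factor is $O(a+b+c)$), which together with the two-sided bounds of Lemma~\ref{lemma:edge_estimate} (giving $a_n\asymp b_n\asymp c_n\asymp 2^{-n}$) yields $a_{n+1}=a_n/2+O(a_n^3)$ without any trigonometric identity. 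The resulting $O(4^{-n})$ increments do make $(2^na_n,2^nb_n,2^nc_n)$ Cauchy, and the angles then converge since $\cos$ is a homeomorphism of $[0,\pi]$ onto $[-1,1]$.

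The genuine gap is nondegeneracy, which is precisely the point the paper flags as ``not trivial.'' You obtain a limit $(\alpha,\beta,\gamma)$ with strictly positive entries and then assert that the limit angles are strictly positive; but positive side lengths do not give positive angles. Each $(a_n,b_n,c_n)$ satisfies the strict triangle inequality, yet nothing you prove prevents the limit from satisfying $\alpha=\beta+\gamma$, in which case $\cos A_n\to-1$ and the limit is a degenerate triangle with angles $(\pi,0,0)$ --- fully consistent with your observation that the angle sum tends to $\pi$, so that observation cannot rescue you. The paper gets nondegeneracy for free because $\ln\sin A_n$ converges, forcing $\sin A_n$ to a \emph{positive} limit. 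Your framework can be repaired: the defect $u_n=2^n(b_n+c_n-a_n)$ also has increments $O(\varepsilon_m^3\,4^{-(n-m)})$ when the estimate is started at stage $m$, so $u_\infty\geq u_m-C\varepsilon_m^3$; since $(b_m+c_m-a_m)/\varepsilon_m^3\sim 4^m(b_0+c_0-a_0)/\varepsilon_0^3\to\infty$, choosing $m$ large enough gives $u_\infty>0$ and hence strict triangle inequalities in the limit. But some argument of this kind must actually be supplied; as written, the proof does not establish that the limit triangle is nondegenerate.
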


\begin{proof} Let $l_{a_n}$ be the length of the lambert qudrilateral on $a_n$, as in figure 2. Then we have
\begin{equation}\label{eqn:lambert_identity}
\sinh\frac{a_n}2=\sinh\frac{a_{n+1}}2\cosh l_{a_n}
\end{equation}

Now we compare $\sin\beta$ with $\sin A_n$. By the law of sines, we have
$$\sin\beta=\frac{\displaystyle{\sinh\frac{a_n}{2}\sin B_n}}{\sinh b_{n+1}},\quad \sin A_n=\frac{\sinh a_n\sin B_n}{\sinh b_n}$$
Hence
$$\frac{\sin\beta}{\sin A_n}=\frac{\displaystyle{\sinh(a_n/2)\sinh b_n}}{\sinh b_{n+1}\sinh a_n}=\frac{\displaystyle{\sinh(a_n/2)\sinh b_n\cosh l_{b_n}}}{\displaystyle{\sinh\frac {b_n}2\sinh a_n}}=\frac{\cosh(l_{b_n})}{\cosh(a_n/2)}\cosh(b_n/2)$$
In the corresponding right-angle triangle, we get $\displaystyle{l_{b_n}<\frac{a_n}2}$, hence we obtain
\begin{equation}
\frac{\cosh(b_n/2)}{\cosh(a_n/2)}<\frac{\sin\beta}{\sin A_n}<\cosh(b_n/2)
\end{equation}
Similarly, we get
\begin{equation}
\frac{\cosh(c_n/2)}{\cosh(a_n/2)}<\frac{\sin\gamma}{\sin A_n}<\cosh(c_n/2)
\end{equation}

Now we compare $\sin\alpha$ and $\sin A_n$. By the Cagnoli formula, we have
\begin{equation}
\sin\frac{S_n}2=\frac{\sinh(b_{n}/2)\sinh(c_n/2)\sin A_n}{\cosh(a_n/2)}
\end{equation}
By the Keogh formula, $S_n$ can also be computed by $b_{n+1},c_{n+1}$ and $\alpha$ as follows
\begin{equation}
\sin\frac{S_n}2=\sinh(b_{n+1})\sinh(c_{n+1})\sin(\alpha)
\end{equation}
So we get
\begin{align*}
\frac{\sin\alpha}{\sin A_n}=& \frac{\sinh(b_{n}/2)\sinh(c_n/2)}{\sinh(b_{n+1})\sinh(c_{n+1})}\cdot\frac{1}{\cosh(a_n/2)}\\
=& \frac{\cosh(l_{b_n})\cosh(l_{c_n})}{\cosh(a_n/2)}\quad (\text{Apply }(\ref{eqn:lambert_identity}))
\end{align*}
Apply $l_{b_n}<\frac{c_n}2$ and $l_{c_n}<\frac{b_n}2$ to get the inequality
$$\frac1{\cosh(a_n/2)}<\frac{\sin \alpha}{\sin A_n}<\cosh(b_n/2)\cosh(c_n/2)$$

So in all cases, we get
\begin{equation}
\frac{1}{\cosh(a_n/2)}<\frac{\sin A_{n+1}}{\sin A_n}<\cosh(b_n/2)\cosh(c_n/2)
\end{equation}

Let $\rho_n=\ln\sin A_n$, then we have
$$-\sum_{i=n}^{n+k-1}\ln\cosh\left(\frac{a_i}2\right)<\rho_{n+k}-\rho_n<\sum_{i=n}^{n+k-1}\left(\ln \cosh\left(\frac{b_i}2\right)+\ln\cosh\left(\frac{c_i}2\right)\right)$$
Hence
\begin{align*}
\abs{\rho_{n+k}-\rho_n}<&\sum_{i=n}^{n+k-1}\left(\ln \cosh\frac{a_i}2+\ln \cosh \frac{b_i}2 +\ln\cosh\frac{c_i}2\right)\\
=&\frac12\sum_{i=n}^{n+k-1}\left(\ln\left(1+\sinh^2\frac{a_i}2\right)+\ln\left(1+\sinh^2\frac{b_i}2\right)+\ln\left(1+\sinh^2\frac{c_i}2\right)\right)\\
<&\frac12\sum_{i=n}^{n+k-1}\left(\sinh^2\frac{a_i}2+\sinh^2\frac{b_i}2+\sinh^2\frac{c_i}2\right)\\
<&\frac12\sum_{i=n}^{\infty}\frac{1}{2^i}\left(\sinh^2\frac{a_0}2+\sinh^2\frac{b_0}2+\sinh^2\frac{c_0}2\right)\\
=&\frac{1}{2^n}\left(\sinh^2\frac{a_0}2+\sinh^2\frac{b_0}2+\sinh^2\frac{c_0}2\right)
\end{align*}
Therefore $\left\{\ln\sin A_n\right\}$ is a Cauchy sequence and converges. Similarly, $\{\ln\sinh B_n\}$ and $\{\ln\sin C_n\}$ converge. The proposition follows.

\end{proof}

For iteration of $f_M$, we compute the areas of the triangles as follows

\begin{prop} Let $\triangle_n=f^{n}_M(\triangle_0)$. Suppose that $\displaystyle{\sinh\frac{a_0}2<1}$, $\displaystyle{\sinh\frac{b_0}2<1}$ and $\displaystyle{\sinh\frac{c_0}2<1}$, then
$$\frac1{\sqrt{e}}\cdot\frac{1}{4^n}\cdot\sinh\frac{S_0}2\leq \sinh\frac{S_n}2\leq \frac{1}{4^n}\sinh\frac{S_0}2,\quad\forall\ n.$$
\end{prop}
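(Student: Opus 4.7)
The plan is to derive a telescoping multiplicative formula for $\sin(S_n/2)$ by combining the Cagnoli area formula with two identities already established in the preceding proofs: the Lambert identity $\sinh(e_n/2)=2\sinh(e_{n+1}/2)\cosh(e_{n+1}/2)\cosh l_{e_n}$ for each edge $e\in\{a,b,c\}$ (from the proof of Lemma~\ref{lemma:edge_estimate}), and the exact ratio $\sin\alpha/\sin A_n=\cosh(l_{b_n})\cosh(l_{c_n})/\cosh(a_n/2)$ isolated inside the proof of Proposition~\ref{prop:converging_nondegenerate}. Writing Cagnoli's formula for both $\triangle_n$ and $\triangle_{n+1}$ and forming the ratio yields
$$\frac{\sin(S_{n+1}/2)}{\sin(S_n/2)}=\frac{\sinh(b_{n+1}/2)\sinh(c_{n+1}/2)}{\sinh(b_n/2)\sinh(c_n/2)}\cdot\frac{\sin A_{n+1}}{\sin A_n}\cdot\frac{\cosh(a_n/2)}{\cosh(a_{n+1}/2)}.$$
For $f_M$ one has $A_{n+1}=\alpha$, so after substituting the Lambert identity into the first factor and the $\alpha$-ratio into the second, each $\cosh l_{e_n}$ and the $\cosh(a_n/2)$ cancel, leaving the clean recursion
$$\sin\frac{S_{n+1}}{2}=\frac{\sin(S_n/2)}{4\cosh(a_{n+1}/2)\cosh(b_{n+1}/2)\cosh(c_{n+1}/2)}.$$

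Iterating gives
$$\sin\frac{S_n}{2}=\frac{\sin(S_0/2)}{4^n\prod_{k=1}^{n}\cosh(a_k/2)\cosh(b_k/2)\cosh(c_k/2)}.$$
The upper bound is immediate from $\cosh\geq 1$. For the lower bound I would control the product through its logarithm, using $\ln\cosh x=\tfrac12\ln(1+\sinh^2 x)\leq\tfrac12\sinh^2 x$ together with the bound $\sinh(e_k/2)<1/2^k$ from Lemma~\ref{lemma:edge_estimate} (valid under the hypothesis $\sinh(e_0/2)<1$). For each $e\in\{a,b,c\}$ this gives $\sum_{k=1}^{\infty}\ln\cosh(e_k/2)<\tfrac12\sum_{k=1}^{\infty}4^{-k}=1/6$, and summing over the three edges the total log is $<1/2$, so the infinite product is bounded by $\sqrt{e}$, producing the lower bound.

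The main obstacle is establishing the one-step recursion cleanly. The algebra itself is short, but the cancellation relies crucially on the identification $A_{n+1}=\alpha$, which is specific to iteration by the middle map $f_M$ (and would read differently for $f_A,f_B,f_C$), and one must keep careful track of which edge each Lambert quantity $l_{e_n}$ is attached to so that the $\cosh l$ factors in the numerator and denominator really match. Once the recursion is in hand, the two bounds reduce to routine geometric-series bookkeeping.
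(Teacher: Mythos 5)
Your argument is correct, but it reaches the key identity by a genuinely different route than the paper. The paper sets $x=2\cosh a_n$, $y=2\cosh b_n$, $z=2\cosh c_n$ and invokes two trace identities, $\bigl(2\cos\frac{S_{n-1}}2\bigr)^2=x^2+y^2+z^2-xyz$ and $\bigl(\cos\frac{S_n}2\bigr)^2=\frac{(x+y+z+2)^2}{(x+2)(y+2)(z+2)}$, from which a short algebraic manipulation yields exactly the one-step recursion you obtain,
$$\sin\frac{S_n}2=\frac{\sin(S_{n-1}/2)}{4\cosh(a_n/2)\cosh(b_n/2)\cosh(c_n/2)},$$
with no reference to angles or Lambert quadrilaterals at all. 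Your derivation instead stays inside the trigonometric toolkit of Proposition \ref{prop:converging_nondegenerate} (Cagnoli, the $\alpha$-ratio, the Lambert identity), and it does go through: the $\cosh l_{b_n}$, $\cosh l_{c_n}$ and $\cosh(a_n/2)$ factors cancel as you claim. In fact you can shorten your route further by dividing Cagnoli's formula for $\triangle_{n+1}$ directly by Keogh's formula for $S_n$; both contain the factor $\sin A_{n+1}=\sin\alpha$, so the cancellation you flag as the main obstacle happens automatically and you never need to track the individual $l_{e_n}$. The trade-off is that the paper's trace-identity computation is self-contained and index-checks trivially, while yours reuses machinery already established and makes transparent where the factor $4\cosh(a_{n+1}/2)\cosh(b_{n+1}/2)\cosh(c_{n+1}/2)$ comes from. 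Your tail estimate ($\ln\cosh u\leq\tfrac12\sinh^2u$ summed against $4^{-k}$, total $<\tfrac12$) produces the same constant $\sqrt{e}$ as the paper's bound $\ln(1+4^{-i})\leq 4^{-i}$. One cosmetic remark: like the paper's own proof, you actually estimate $\sin(S_n/2)$, whereas the statement is printed with $\sinh$; this is evidently a typo in the statement rather than a gap in either argument.
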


\begin{proof} For any $n$, let 
$$x=2\cosh a_{n},\quad y=2\cosh b_{n},\quad z=2\cosh c_{n}$$
Then by the trace identity, we have
$$\left(2\cos\frac{S_{n-1}}2\right)^2=x^2+y^2+z^2-xyz$$
and
$$\left(\cos\frac{S_{n}}2\right)^2=\frac{(x+y+z+2)^2}{(x+2)(y+2)(z+2)}$$
Therefore we have
{\allowdisplaybreaks\begin{align*}
\left(\sin\frac{S_{n}}2\right)^2=&1-\left(\cos\frac{S_{n}}2\right)^2=1-\frac{(x+y+z+2)^2}{(x+2)(y+2)(z+2)}\\
=&\frac{(x+2)(y+2)(z+2)-(x+y+z+2)^2}{(x+2)(y+2)(z+2)}\\
=&\frac{4-(x^2+y^2+z^2-xyz)}{(x+2)(y+2)(z+2)}=\frac{4-4\left(\cos\displaystyle{\frac{S_{n-1}}2}\right)^2}{(x+2)(y+2)(z+2)}\\
=&\frac{4}{(2\cosh a_{n}+2)(2\cosh b_{n}+2)(2\cosh c_{n}+2)}\left(\sin\frac{S_{n-1}}2\right)^2\\
=&\frac{4}{(4\sinh^2 \frac{a_{n}}2+4)(4\sinh^2 \frac{b_{n}}2+4)(4\sinh^2 \frac{c_{n}}2+4)}\left(\sin\frac{S_{n-1}}2\right)^2\\
=&\left(\frac14\sin\frac{S_{n-1}}2\right)^2\left/\left[(1+\sinh^2 \frac{a_{n}}2)(1+\sinh^2 \frac{b_{n}}2)(1+\sinh^2 \frac{c_{n}}2)\right]\right.
\end{align*}}
Without loss of generality, we may assume $\sinh\displaystyle{\frac{a_0}2}<1$. By Lemma, we have $\sinh\displaystyle{\frac{a_{n}}2}<\frac12 \sinh\frac{a_{n-1}}2$, then $\displaystyle{\sinh\frac{a_n}2<\frac1{2^n}}$, we get
\begin{align*}
\left(\sin\frac{S_{n}}2\right)^2\geq&\left(\frac14\sin\frac{S_{n-1}}2\right)^2\left/\left(1+\frac1{4^n}\right)^3\right.\geq\cdots\geq\frac{1}{16^n}\left(\sin\frac{S_0}2\right)^2\left/\prod_{i=1}^n\left(1+\frac1{4^i}\right)^3\right.\\
=&\frac{1}{16^n}\left(\sin\frac{S_0}2\right)^2\left/\exp\left\{3\sum_{i=1}^n\ln\left(1+\frac1{4^i}\right)\right\}\right.\\
\geq&\frac{1}{16^n}\left(\sin\frac{S_0}2\right)^2\left/\exp\left\{3\sum_{i=1}^\infty\frac1{4^i}\right\}\right.=\frac1{16^n}\cdot\frac1e\cdot\left(\sin\frac{S_0}2\right)^2
\end{align*}
\end{proof}

\begin{cor}  For the sequence $\left\{\triangle_n=f_M^n(\triangle_0)\right\}$, let $S_n$ be the area of $\triangle_n$, then the sequence $\displaystyle{\left\{\frac{\sin(S_n/2)}{\sin(S_0/2)}\cdot4^n\right\}}$ converges and its limit belongs to $\displaystyle{\left(\frac{1}{\sqrt{e}},\sqrt{e}\right)}$.
\end{cor}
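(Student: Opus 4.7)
The plan is to recognize this corollary as a direct repackaging of the preceding proposition. First, I would extract the one-step recursion from the proof of that proposition, namely
$$\sin\frac{S_n}{2} = \frac{1}{4}\sin\frac{S_{n-1}}{2}\cdot\frac{1}{\sqrt{(1+\sinh^2(a_n/2))(1+\sinh^2(b_n/2))(1+\sinh^2(c_n/2))}},$$
and iterate it to write the target sequence $P_n := \frac{\sin(S_n/2)}{\sin(S_0/2)}\cdot 4^n$ as a product
$$P_n = \prod_{k=1}^n\frac{1}{\sqrt{(1+\sinh^2(a_k/2))(1+\sinh^2(b_k/2))(1+\sinh^2(c_k/2))}}$$
whose factors all lie in $(0,1]$.

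From this product representation, $P_n$ is manifestly positive and monotonically nonincreasing. Convergence of $P_n$ then follows either by monotone-plus-bounded, using the lower bound from the preceding proposition, or equivalently from the absolute convergence of $\sum_k \ln(1+\sinh^2(x_k/2))$ for $x\in\{a,b,c\}$, which is itself a direct consequence of the geometric decay $\sinh^2(x_k/2) < 1/4^k$ established in Lemma \ref{lemma:edge_estimate}.

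The bounds on the limit $L$ are then read off from the same estimates. The upper bound $L < \sqrt{e}$ is immediate, since $P_0 = 1$ and the sequence is nonincreasing, so $L \leq 1 < \sqrt{e}$. For the strict lower bound $L > 1/\sqrt{e}$, I would revisit the computation in the preceding proposition, where $-2\ln P_n$ is bounded above by $3\sum_{k=1}^\infty 1/4^k = 1$; the point is that every inequality used there (namely $\ln(1+x)<x$ for $x>0$ and $\sinh^2(x_k/2)<1/4^k$ from the Lemma) is strict for a nondegenerate triangle, so the bound is strict and hence $L > e^{-1/2}$.

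There is no substantial obstacle; the corollary is essentially a repackaging of the preceding proposition. The only content to add is the observation that $P_n$ is monotone (which gives convergence together with the upper bound $L\le1$) and that the bounding inequalities are in fact strict (so that $L$ lies strictly inside the open interval).
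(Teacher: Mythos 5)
Your proposal is correct and follows exactly the route the paper intends: the corollary is stated without proof as a direct consequence of the preceding proposition, and you correctly supply the two missing observations, namely that the product representation $P_n=\prod_{k=1}^n\bigl[(1+\sinh^2(a_k/2))(1+\sinh^2(b_k/2))(1+\sinh^2(c_k/2))\bigr]^{-1/2}$ makes the sequence monotone (hence convergent, not merely bounded between $1/\sqrt{e}$ and $1$), and that the limit lies strictly inside the interval. One small point worth making explicit: strict inequalities need not survive passage to the limit, but here $-2\ln L=\sum_{k\ge1}t_k$ with $t_1<3/4$ and $\sum_{k\ge2}t_k\le\sum_{k\ge2}3/4^k=1/4$, so the slack in the first term already forces $-2\ln L<1$ and hence $L>e^{-1/2}$.
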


So the sine of the area of the middle triangle is shrinking to approximately $\frac14$ the sine of area of the previous triangle, with accumulated error of ratio less than $\sqrt{e}$.

\section{Continuity of the limit functions}

Let $\calt_E$ be the Techm\"uller space of nontrivial Euclidean triangles modulo similarities and $\calt_H$ be the Techm\"uller space of nontrivial hyperbolic triangles. $\calt_E$ is naturally identified with the interior of the triangle in $\bbr^3$ with vertices $(1,0,0)$, $(0,1,0)$ and $(0,0,1)$. $\calt_H$ is naturally identified with the interior of the tetrahedron in $\bbr^3$ with vertices $(0,0,0)$, $(1,0,0)$, $(0,1,0)$ and $(0,0,1)$. $\calt_E$ and $\calt_H$ inherit metrics as subspaces of $\bbr^3$. We have $\calt=\calt_E\cup\calt_H$.

Let $\cals$ be the set of infinite sequences in four letters $A$, $B$, $C$ and $M$. A sequence $\fraks$ is called {\it rational} if exactly one of the three letters $A$, $B$ and $C$ appears infinite times, and {\it irrational} otherwise. Fix a Eulidean or hyperbolic triangle $\triangle=\triangle_{ABC}$. A sequence in $s\in\cals$ defines nested triangles $\{\triangle_n\}$ in $\triangle$ via $f_A$, $f_B$, $f_C$ and $f_M$. The nested triangles $\{\triangle_n\}$ have a unique intersection point, denoted by $\phi(s)$. It is not hard to see that $\phi$ defines a surjective map from $\cals$ to $\triangle$. Endow $\cals$ with the smallest topology $\calt$ such that $\phi$ is continuous. The topology does not depend on the choice of the Euclidean triangle $\triangle$.

An easy investigation of $\phi$ gives the following 

\begin{prop}\label{prop:sequence_map}
For any two distinct sequences $\fraks$ and $\frakt$, we have $\phi(\fraks)=\phi(\frakt)$ if and only if there exists
\begin{itemize}
\item a finite sequence $\tau_1,\cdots,\tau_{n}$ for some $n\geqslant0$,
\item a permutation $\sigma:\{A,B,C\}\rightarrow\{A,B,C\}$, and
\item a finite sequence $\zeta:\{1,\cdots,m\}\rightarrow\{x,y\}$ in two indeterminants $x$ and $y$ with $m\geqslant0$, 
\end{itemize}
such that $\fraks$ and $\frakt$ are of the following six forms

\begin{tabular}{c c c c c c c c c c c c c c}
$(1)$&$\tau_1$&$\tau_2$&$\cdots$&$\tau_{n}$&$\sigma(A)$&$\alpha_1$&$\alpha_2$&$\cdots$&$\alpha_m$&$M$&$\sigma(A)$&$\sigma(A)$&$\cdots$\\
$(2)$&$\tau_1$&$\tau_2$&$\cdots$&$\tau_{n}$&$\sigma(A)$&$\alpha_1$&$\alpha_2$&$\cdots$&$\alpha_m$&$\sigma(B)$&$\sigma(C)$&$\sigma(C)$&$\cdots$\\
$(3)$&$\tau_1$&$\tau_2$&$\cdots$&$\tau_{n}$&$\sigma(A)$&$\alpha_1$&$\alpha_2$&$\cdots$&$\alpha_m$&$\sigma(C)$&$\sigma(B)$&$\sigma(B)$&$\cdots$\\
$(4)$&$\tau_1$&$\tau_2$&$\cdots$&$\tau_{n}$&$M$&$\beta_1$&$\beta_2$&$\cdots$&$\beta_m$&$M$&$\sigma(A)$&$\sigma(A)$&$\cdots$\\
$(5)$&$\tau_1$&$\tau_2$&$\cdots$&$\tau_{n}$&$M$&$\beta_1$&$\beta_2$&$\cdots$&$\beta_m$&$\sigma(B)$&$\sigma(C)$&$\sigma(C)$&$\cdots$\\
$(6)$&$\tau_1$&$\tau_2$&$\cdots$&$\tau_{n}$&$M$&$\beta_1$&$\beta_2$&$\cdots$&$\beta_m$&$\sigma(C)$&$\sigma(B)$&$\sigma(B)$&$\cdots$
\end{tabular}

\noindent where $\alpha_i=\zeta_i(\sigma(B),\sigma(C))$ and $\beta_i=\zeta_i(\sigma(C),\sigma(B))$.
\end{prop}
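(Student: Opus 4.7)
The plan is to first reduce to the case where $\fraks$ and $\frakt$ disagree at their first letter: let $n\geqslant 0$ be the largest integer with $\fraks_i=\frakt_i$ for $i\leqslant n$, call this common prefix $\tau_1\cdots\tau_n$, and pass to the sub-triangle $\triangle^{*}:=f_{\tau_n}\circ\cdots\circ f_{\tau_1}(\triangle)$, which contains $p:=\phi(\fraks)=\phi(\frakt)$. A finite check enumerates the only nontrivial pairwise intersections among the four sub-triangles of $\triangle^{*}$: two distinct corners $f_X(\triangle^{*})$ and $f_Y(\triangle^{*})$ meet only at the midpoint of the edge $XY$, while a corner $f_X(\triangle^{*})$ and the middle $f_M(\triangle^{*})$ meet along the midline opposite $X$. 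Since $p$ lies in both $f_{\fraks_{n+1}}(\triangle^{*})$ and $f_{\frakt_{n+1}}(\triangle^{*})$, a suitable permutation $\sigma$ of $\{A,B,C\}$ puts us in one of two canonical situations: either the two disagreeing letters form the pair $\{\sigma(A),M\}$ with $p$ on the midline opposite $\sigma(A)$, or they form the pair $\{\sigma(B),\sigma(C)\}$ with $p$ the midpoint of the edge $\sigma(B)\sigma(C)$.

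The inductive core is the midline case. There $p$ sits on the common edge of $f_{\sigma(A)}(\triangle^{*})$ and $f_M(\triangle^{*})$, which in each of these sub-triangles is the edge opposite the intrinsic ``$A$''-vertex. The crucial geometric observation is that the natural labeling identifies the intrinsic ``$B$''-vertex of $f_{\sigma(A)}(\triangle^{*})$ with the intrinsic ``$C$''-vertex of $f_M(\triangle^{*})$, and vice versa; this single label swap is exactly the source of the formulas $\alpha_i=\zeta_i(\sigma(B),\sigma(C))$ and $\beta_i=\zeta_i(\sigma(C),\sigma(B))$, in which the same abstract binary choice $\zeta_i$ is read off as opposite letters in the two branches. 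Applying the reduction of the first paragraph to the two shifted sequences, which now describe $p$ inside $f_{\sigma(A)}(\triangle^{*})$ and $f_M(\triangle^{*})$ respectively, peels off one matched pair $(\alpha_i,\beta_i)$ at a time and must eventually drop into the terminal two-corner configuration.

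In that terminal configuration, $p$ is the midpoint of an edge of some depth-$(n+m+1)$ sub-triangle, and hence is simultaneously a vertex of three further sub-triangles: the middle (as its intrinsic ``$A$''-vertex) and the two incident corners (as a ``$C$''-vertex and as a ``$B$''-vertex, respectively). These three roles dictate three possible terminal tails, namely $M\,\sigma(A)\,\sigma(A)\cdots$, $\sigma(B)\,\sigma(C)\,\sigma(C)\cdots$, and $\sigma(C)\,\sigma(B)\,\sigma(B)\cdots$; pairing them with the two possible entrances at position $n+1$ (namely $\sigma(A)$ under the $\alpha$-convention, or $M$ under the $\beta$-convention) yields exactly the six forms in the statement. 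The converse, that any two of the six forms sharing the same $\tau,\sigma,\zeta$ converge to the same point, is a direct induction on $m$ using the same geometric picture. The principal technical obstacle is the careful bookkeeping of the $B$/$C$ label swap at every recursive step, which is what forces the $\alpha$-versus-$\beta$ asymmetry; once that swap is correctly tracked, the six-form combinatorics is determined by the three intrinsic vertex roles at the terminal midpoint.
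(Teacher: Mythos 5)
The paper gives no argument for this proposition at all (it is introduced only with ``An easy investigation of $\phi$ gives the following''), so there is no official proof to compare against. Your strategy---strip the common prefix, classify the pairwise intersections of the four sub-triangles of $\triangle^{*}$, and recurse along the shared midline using the orientation reversal---is the natural one, and your two key geometric observations are correct: the $B$-vertex of $f_{\sigma(A)}(\triangle^{*})$ is the $C$-vertex of $f_M(\triangle^{*})$ and vice versa, and the midpoint of the $a$-edge of a triangle is simultaneously the $A$-vertex of its middle child, the $C$-vertex of its $B$-corner, and the $B$-vertex of its $C$-corner. The genuine gap is the sentence asserting that the peeling ``must eventually drop into the terminal two-corner configuration.'' You offer no argument for termination, and none exists: the recursion terminates exactly when the common point $p$ is, at some stage, the midpoint of the current sub-edge, i.e.\ when $p$ is a dyadic point of the shared midline under iterated halving. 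For a non-dyadic point of the midline, at every step both next letters are forced and (by your own reversal observation) opposite, so the peeling runs forever and produces two distinct sequences $\tau\,\sigma(A)\,\alpha_1\alpha_2\cdots$ and $\tau\,M\,\beta_1\beta_2\cdots$ governed by an \emph{infinite}, non-eventually-constant word $\zeta$. The nested closed triangles on both sides all contain $p$, so these two sequences have the same image under $\phi$, yet they match none of the six forms, all of which demand a finite $\zeta$ followed by an eventually constant tail. Thus the ``only if'' direction cannot be closed as you argue it; indeed the proposition as literally stated (and the paper's subsequent claim that irrational sequences have unique $\phi$-images) fails on such pairs, since both sequences contain $\sigma(B)$ and $\sigma(C)$ infinitely often and are therefore irrational in the paper's sense.

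A secondary defect in your terminal case: you pair the three exit tails $M\,\sigma(A)\,\sigma(A)\cdots$, $\sigma(B)\,\sigma(C)\,\sigma(C)\cdots$, $\sigma(C)\,\sigma(B)\,\sigma(B)\cdots$ only with the two entrances $\sigma(A)$ or $M$ at position $n+1$, i.e.\ you assume the coincidence happens across an interior midline at depth at least one. But the midpoint of an edge of the original triangle (more generally, a dyadic point of an outer edge of $f_{\tau_n}\circ\cdots\circ f_{\tau_1}(\triangle)$ rather than of an interior midline) is reached by the three tails directly, with no entrance block; a pair such as $M\,\sigma(A)\,\sigma(A)\cdots$ versus $\sigma(B)\,\sigma(C)\,\sigma(C)\cdots$ then has equal image but fits none of the six listed forms, each of which inserts at least one letter before the switch. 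This boundary case is easy to repair by allowing the entrance block to be empty, but your case analysis of ``two entrances times three exits'' passes over it silently.
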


Two distinct sequences with the same image under $\phi$ are rational. An irrational sequence does not have the same image under $\phi$ with another sequence. Proposition \ref{prop:sequence_map} implies that the space $(\cals, \calt)$ is not Hausdorff.

Given a sequence $\fraks\in\cals$ and a hyperbolic triangle $\triangle_0=\triangle_{A_0B_0C_0}\in\calt_H$. Inductively define $\triangle_n=f_{s_n}(\triangle_{n-1})$. By Proposition \ref{prop:converging_nondegenerate}, the sequence $\{\triangle_n\}$ converge to a nondegenerate Euclidean triangle. Hence we have a well-defined map $\Phi:\cals\times \calt_H\rightarrow \calt_E$.

\begin{prop}\label{prop:continuity}
The map $\Phi:\cals\times \calt_H\rightarrow \calt_E$ satisfies the following continuity properties
\begin{enumerate}
\item For any $\fraks\in\cals$, $\Phi_\fraks=\Phi(\fraks,\cdot):\calt_H\rightarrow \calt_E$ is continous and surjective.
\item For any $\triangle\in \calt_H$, $\Phi_\triangle=\Phi(\cdot,\triangle):\cals\rightarrow \calt_E$ is continous at irrational points.
\end{enumerate}
\end{prop}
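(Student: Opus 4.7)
The plan is to prove the three assertions of Proposition~\ref{prop:continuity}—continuity of $\Phi_\fraks$, surjectivity of $\Phi_\fraks$, and continuity of $\Phi_\triangle$ at irrational $\fraks$—in that order, using the uniform shrinkage estimates from Section~2 together with Proposition~\ref{prop:sequence_map}.

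For continuity of $\Phi_\fraks$, each iterate $\triangle_0\mapsto\triangle_n=f_{s_n}\circ\cdots\circ f_{s_1}(\triangle_0)$ is a finite composition of smooth maps and hence continuous on $\calt$. The estimate
\[
\abs{\ln\sin A_{n+k}-\ln\sin A_n}<\frac{1}{2^n}\left(\sinh^2\frac{a_0}{2}+\sinh^2\frac{b_0}{2}+\sinh^2\frac{c_0}{2}\right),
\]
extracted from the proof of Proposition~\ref{prop:converging_nondegenerate}, is uniform in $\triangle_0$ on any compact $K\subset\calt_H$, so $\triangle_n\to\Phi_\fraks(\triangle_0)$ uniformly for $\triangle_0\in K$; a uniform limit of continuous maps is continuous.

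The main obstacle is surjectivity of $\Phi_\fraks$, since this map strictly lowers dimension from $3$ to $2$. I would attack it as follows. Each $f_X$ fixes Euclidean triangles up to similarity, and hyperbolic edge lengths shrink to zero as the sum $x+y+z$ approaches $\pi$, so the estimate above extends $\Phi_\fraks$ continuously to $\calt=\calt_H\cup\calt_E$ with $\Phi_\fraks|_{\calt_E}=\mathrm{id}$. Parametrize a collar of $\calt_E$ in $\calt$ by $(\triangle_E,t)\mapsto\triangle_E-(t,t,t)$; by continuity of the extension, the slice map $g_t\colon\triangle_E\mapsto\Phi_\fraks(\triangle_E-(t,t,t))$ converges to $\mathrm{id}_{\calt_E}$ uniformly on any compact $K'\subset\calt_E$ as $t\to 0^+$. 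Given a target $\triangle_E^\ast\in\calt_E$, choose a closed disk neighborhood $K'\subset\calt_E$ of $\triangle_E^\ast$ and then $t>0$ so small that $\sup_{K'}\abs{g_t-\mathrm{id}}<d(\triangle_E^\ast,\partial K')$; the straight-line homotopy from $\mathrm{id}$ to $g_t$ then avoids $\triangle_E^\ast$ on $\partial K'$, so $\deg(g_t,K',\triangle_E^\ast)=\deg(\mathrm{id},K',\triangle_E^\ast)=1$ and therefore $\triangle_E^\ast\in g_t(K')\subseteq\Phi_\fraks(\calt_H)$.

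For continuity of $\Phi_\triangle$ at an irrational $\fraks$, Proposition~\ref{prop:sequence_map} yields $\phi^{-1}(\phi(\fraks))=\{\fraks\}$, so $\phi(\fraks)$ must lie in $\mathrm{int}(\triangle^\fraks_N)$ for every $N$ (otherwise an adjacent $N$-th level sequence could be extended to produce a second preimage of $\phi(\fraks)$, contradicting irrationality). Since distinct level-$N$ subtriangles share only boundary, $\phi^{-1}(\mathrm{int}(\triangle^\fraks_N))$ is an open subset of $V_N(\fraks):=\{\frakt\in\cals:\fraks_i=\frakt_i,\,1\leq i\leq N\}$ containing $\fraks$, so $V_N(\fraks)$ is a neighborhood of $\fraks$ in $\cals$. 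For $\frakt\in V_N(\fraks)$ both iterations pass through the same triangle $\triangle_N$ at step $N$; applying the continuity estimate of the first paragraph to the restart from $\triangle_N$ with the two tails, together with $\sinh(a_N/2)\leq 2^{-N}\sinh(a_0/2)$ from Lemma~\ref{lemma:edge_estimate}, gives $\abs{\Phi_\triangle(\frakt)-\Phi_\triangle(\fraks)}\leq C(\triangle)/4^N$, which tends to zero as $N\to\infty$.
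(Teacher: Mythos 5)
Your overall strategy coincides with the paper's on all three points: continuity of $\Phi_\fraks$ via uniform convergence of the log-sine estimates from Proposition \ref{prop:converging_nondegenerate}; surjectivity via the fact that $\Phi_\fraks$ is uniformly close to the projection onto the Euclidean face near $x+y+z=\pi$, combined with a topological argument (your local degree computation on a disk is equivalent to the paper's contradiction between the free homotopy class of the loop $\Phi_\fraks(C_2)$ and the null-homotopy it inherits from the disk it bounds); and continuity of $\Phi_\triangle$ at irrational points via cylinder neighborhoods plus the $4^{-N}$ tail estimate, which matches the paper's $U_N$ and $(e^{-\mu},e^{\mu})$ argument.

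The one step that is not correct as written is the claim, in your third part, that $\phi(\fraks)\in\mathrm{int}(\triangle^\fraks_N)$ for every irrational $\fraks$, justified by extending an ``adjacent'' level-$N$ address to manufacture a second preimage. This fails when $\phi(\fraks)$ lies on the portion of $\partial\triangle^\fraks_N$ contained in $\partial\triangle$: for example, an aperiodic sequence in the two letters $B$ and $C$ alone is irrational in the paper's sense, yet every $\triangle^\fraks_N$ along it keeps an edge on the side $BC$ of $\triangle$ and $\phi(\fraks)$ lies on that side, so it belongs to the interior of no $\triangle^\fraks_N$ and there is no adjacent subtriangle across that edge from which to produce a second preimage. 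The repair uses the same mechanism and costs nothing: by the remark following Proposition \ref{prop:sequence_map}, an irrational $\fraks$ has $\phi(\fraks)$ contained in no level-$N$ subtriangle other than $\triangle^\fraks_N$, so the complement in $\triangle$ of the union of the other level-$N$ subtriangles is an open set $W_N$ containing $\phi(\fraks)$ with $\phi^{-1}(W_N)\subseteq V_N(\fraks)$, which is all you need to conclude that $V_N(\fraks)$ is a neighborhood of $\fraks$. (The paper simply asserts that $U_N$ is a neighborhood without argument, so your attempt to justify it is an improvement once corrected.) The remaining gloss---passing from closeness of $\ln\sin$ of the angles to closeness of the angles themselves---is shared with the paper and is harmless, since the sines of the three angles of a Euclidean triangle determine the angles via the law of sines.
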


\begin{proof}

The functions $f_A$, $f_B$, $f_C$ and $f_M$ are smooth. For the continuity, it suffices to show that the series $\displaystyle{\sum_{n=1}^\infty(\ln\sin A_n-\ln\sin A_{n-1})}$ is uniformly convergent, which directly follows from the last inequality in the proof of Proposition \ref{prop:converging_nondegenerate}.

Let $p:\calt_H\rightarrow\calt_E$ be the projection
$$(x,y,z)\longmapsto \left(\frac{x}{x+y+z}\cdot\pi,\frac{z}{x+y+z}\cdot\pi,\frac{z}{x+y+z}\cdot\pi\right).$$

If $\Phi_\fraks$ is not surjective, take $v\in\calt_E\setminus\phi_\fraks(\calt_H)$. There exists some $\varepsilon>0$ such that $\calt_E$ contains the circle $C_1$ with center $v$ and radius $\varepsilon$.

By the proof of Proposition \ref{prop:converging_nondegenerate}, there exists some sufficiently small $s>0$ such that
$$\abs{\Phi_\fraks(x)-p(x)}< \frac{\varepsilon}2$$
for any $x$ in the circle $C_2$, which is the intersection of $p^{-1}(C_1)$ and $x+y+z=\pi-s$. Then the singular loop $\Phi_\fraks(C_2)$ is free homotopic to $C_1$ in $\calt_E\setminus\{v\}$ by straight-line homotopy, and hence not null homotopic. However, $C_2$ bounds a disk in $x+y+z=\pi-s$ whose image gives a null homotopy of $\Phi_\fraks(C_2)$. The contradiction implies the surjectivity of $\Phi_s$.

Given a hyperbolic triangle $\triangle=\triangle_{ABC}$. Use $\triangle$ to define the topology on $\cals$. Let $\fraks=(s_1,s_2,\cdots)$ be an irrational sequence. Let $\{\triangle_n\}$ be the sequence to define $\Phi(\fraks,\triangle)$. For any $n$, the set $U_n$ of sequences starting with $(s_1,s_2,\cdots,s_n)$ is a neighborhood of $\fraks$. Given any $\varepsilon\in(0,1)$, let 
$$\mu=\min\left\{\ln\left(\frac\varepsilon{14\pi}+1\right), \ln\left(\frac{1}{1-\varepsilon/(14\pi)}\right)\right\}$$
By Proposition \ref{prop:converging_nondegenerate}, there exist $N$ such that for any $\frakt\in U_N$, the ratio of the angles of $\Phi(\frakt,\triangle)$ and the angles $(A_N,B_N,C_N)$ of $\triangle_N$ is within $(e^{-\mu}, e^\mu)$. For any $\frakt\in U_N$, suppose $\Phi_\triangle(\frakt)=(\alpha,\beta,\gamma)$ and $\Phi_\triangle(\fraks)=(\alpha_0,\beta_0,\gamma_0)$, then
\begin{align*}
d(\Phi_\triangle(\frakt),\Phi_\triangle(\fraks))\leq&\abs{\alpha-\alpha_0}+\abs{\beta-\beta_0}+\abs{\gamma-\gamma_0}\\
\leq& \abs{\alpha-A_N} +\abs{A_N-\alpha_0}+\abs{\beta-B_N}+\abs{B_N-\beta_0}+\abs{\gamma-C_N}+\abs{C_N-\gamma_0}\\
\leq& 2(A_N+B_N+C_N)\cdot \big((1-e^{-\mu})+(e^\mu-1)\big)\\
\leq& 6\pi\cdot \left(\frac\varepsilon{14\pi}+\frac\varepsilon{14\pi}\right)=\frac67\cdot\varepsilon<\varepsilon.
\end{align*}
Hence $\Phi_\triangle$ is continuous at $\triangle$.
\end{proof}

We remark that in generral $\Phi_\triangle$ can be discontinuous at an rational sequence.

\begin{proof}[Proof of Theorem \ref{thm:main}]
The theorem evidently follows from Proposition \ref{prop:converging_nondegenerate} and Proposition \ref{prop:continuity}.
\end{proof}

\subsubsection*{Acknowledgement} The first named author is partially supported by NSFC 11425102.

\end{document}